\let\oldbibliography\thebibliography
\renewcommand{\thebibliography}[1]{%
	\oldbibliography{#1}%
	\setlength{\itemsep}{0pt}%
}
\newtheorem{thm}[equation]{Theorem}
\newtheorem{cor}[equation]{Corollary}
\newtheorem{lem}[equation]{Lemma}
\newcommand{\thmref}[1]{Theorem~\ref{#1}}
\numberwithin{equation}{section}
\newcommand\e{\varepsilon}
\renewcommand\l{\lambda}
\renewcommand\L{\Lambda}
\newcommand\f{\frac}
\newcommand\smallf[2]{{\textstyle{\frac{#1}{#2}}}}
\newcommand{\Z}{{\mathbb{Z}}}
\newcommand{\R}{{\mathbb{R}}}
\newcommand{\C}{{\mathbb{C}}}
\DeclareMathOperator{\sech}{sech}
\renewcommand\i{^{-1}}
\renewcommand\({\left(}
\renewcommand\){\right)}
\newcommand\srel[2]{\begin{smallmatrix} {#1} \\ {#2} \end{smallmatrix}}
\newcommand{\gobble}[1]{}
  \newcommand{\rangeref}[2]{%
    \ref{#1}--\afterassignment\gobble\fam 0\ref{#2}%
  }
\def\imod#1{\allowbreak\mkern5mu({\operator@font mod}\,#1)}
\title{Kissing numbers and transference theorems from generalized tail bounds}
\author{Stephen D.~Miller\thanks{Rutgers University. Supported by NSF grant CNS-1526333.}
\\
Noah Stephens-Davidowitz\thanks{Massachusetts Institute of Technology.}}
\begin{document}

\maketitle

\vspace{-1cm}

\begin{abstract}
We generalize Banaszczyk's  seminal tail bound for the Gaussian mass of a lattice to a wide class of test functions.  From this we obtain quite general transference bounds, as well as bounds on the number of lattice points contained in certain bodies.   As  applications, we bound the lattice kissing number in $\ell_p$ norms by $e^{(n+ o(n))/p}$ for $0 < p \leq 2$, and also  give a  proof of a new transference bound in the $\ell_1$ norm.
\end{abstract}

\section{Introduction}\label{sec:Introduction}

A lattice $\L \subset \R^n$ is the set of integral linear combinations of some  basis   $\{b_1, \ldots, b_n\}$ of $\R^n$.
The dual lattice
\[
\L^*= \{x\in \R^n \ : \ x\cdot \lambda\in \Z,\ \forall \lambda\in \L\}
\]
is the set of vectors that have integer inner product with all lattice vectors, and is itself a lattice satisfying $(\L^*)^* = \L$.
A ubiquitous classical tool for studying lattices (with many applications in fields as diverse as number theory and computer science) is the {\it Gaussian mass}
\[
\sum_{\l \in \L} e^{-\pi  \|\l + v\|_2^2}
\; ,
\]
for $v \in \R^n$, where $\|x\|_2 := (x_1^2 + x_2^2 + \cdots+ x_n^2)^{1/2}$ is the Euclidean norm of $x = (x_1,\ldots, x_n) \in \R^n$. (See, for example, \cite{Jacobi1828,Riemann, MO,Banaszczyk, Cai,BianePY01,SS06,Mumford,MR,katz,oded05,RS17,NSDthesis}.)  The case of $v=0$ specializes to the usual $\theta$-function of the lattice $\Lambda$.

 Banaszczyk~\cite{Banaszczyk} proved an important \emph{tail bound} on the Gaussian mass of lattice points outside of a ball,
\begin{equation}
	\label{eq:banaszczyk}
	\sum_{\stackrel{\l \, \in\, \L}{\|\l + v\|_2\, \geq \,r}} e^{-\pi  \|\l + v\|_2^2}  \ \ \leq \ \  (2\pi  e n\i r^2)^{n/2} \, e^{-\pi  r^2}  \sum_{\l \, \in\, \L} e^{-\pi  \|\l\|_2^2}
\end{equation}
for any $r \geq \sqrt{\frac{n}{2\pi}}$ and {\it any} lattice $\Lambda$. He then used this bound to prove nearly optimal \emph{transference theorems}, which  relate the   geometry of $\L$ to that of $\L^*$ (see Section~\ref{sec:banaszczyk_recovered}).
 Both the tail  bound and the transference theorems have since found many additional applications in the study of the geometry of lattices (e.g.,~\cite{Banaszczykpolar,Cai}), algorithms for computational problems over lattices (e.g.,~\cite{Klein00,LiuLM06,NguyenVidick08,ADRS15}), the complexity of such problems (e.g.,~\cite{AharonovR04,MR,oded05}), and lattice-based cryptography (e.g.,~\cite{GPV08,Gentry09,Peikert09}), among other fields.

\subsection{Tail and transference bounds beyond Gaussians}

Given the  importance of \eqref{eq:banaszczyk}, we find it natural to ask for which test functions $f:\R^n \to \R_{\ge  0}$ and  subsets $K \subset \R^n$ one can obtain estimates for sums of the form
\begin{equation}\label{unnumberedequation}
	\sum_{\stackrel{\l \in \L}{\lambda + v \notin K}} f(\l+ v)
	\; .
\end{equation}
 For example, our application  in \thmref{thm:ell1_transference} uses the test function
 $f(x_1,\ldots, x_n) = \prod_i (1+2\cosh(2\pi x_i/\sqrt{3}))^{-1}$, while our application in Theorem~\ref{thm:handshake} uses $f(x) = e^{-\|x\|_p^p}$ for $0<p\le 2$, where $\|x\|_p = \|(x_1,\ldots, x_n)\|_p := (|x_1|^p + \cdots + |x_n|^p)^{1/p}$. To that end, we generalize Banaszczyk's elegant Fourier-analytic proof of~\eqref{eq:banaszczyk} into a more flexible framework (see Section~\ref{sec:Poissonsum}).
For example, we prove the tail bound
\begin{flalign}
\label{eq:tail_bound_intro}
\text{({\textbf{\thmref{mainthm}, Part~\ref{item:main_two}}}:)}   	&&\sum_{\srel{\l\,\in\,\L}{\l + v \notin K}} f(\l+v) \ \ \leq \ \ \nu_f(K) \sum_{\l\,\in\,\L}f(\l) &&
\end{flalign}
for any subset $K \subset \R^n$, where
\begin{equation}
	\label{eq:nu_def_intro}
	\nu_f(K) \ \ := \ \ \inf_{0 < u \le  1} \, \sup_{x \, \notin \, K}   \frac{f(x)}{u^n f(ux)}
	\,.
\end{equation}
 (See \thmref{mainthm} for precise conditions on the function $f$.)
 We also show in Corollary~\ref{cor:main_theorem_norms} that the bound \eqref{eq:tail_bound_intro} takes a particularly nice form  for compact sets  $K \subset \R^n$ which are   starlike with respect to the origin,\footnote{In terms of \eqref{normK}, this means for each $r>0$ we have $x\notin rK\Longleftrightarrow \|x\|_K>r$.} and for functions (such as Gaussians) that satisfy a certain concavity condition (see~\eqref{ratiomonot}) and which depend only on the ``norm''
\begin{equation}\label{normK}
\|x\|_K  \ \ :=  \ \ \min \{r \geq 0 \ : \ x \in rK \}\,.
\end{equation}

Next, following  Banaszczyk's approach \cite{Banaszczyk},  we use~\eqref{eq:tail_bound_intro} to show a general transference bound in Theorem~\ref{thm:generic_transference}, which relates the geometry of $\L$ and $\L^*$.
To that end, for any starlike compact set $K \subset \R^n$ with the origin in its interior, and any lattice $\L \subset \R^n$, we define
\begin{align}
\label{eq:sigma_K_def}
\sigma_{K}(\L) & \ \ :=  \ \ \min_{\l \in \L_{\neq 0}} \|\l \|_K \; , \\
\text{ and} \ \ \
\label{eq:rho_K_def}
\rho_{K}(\L) & \ \ :=  \ \ \max_{v \in \R^n} \min_{\l \in \L} \|\l-v\|_K
\;.
\end{align}
 I.e., $\sigma_K(\L)$ is the length of the shortest non-zero vector and $\rho_K(\L)$ is the covering radius in the $\|\cdot \|_K$ ``norm.'' We show that
\begin{flalign}
\label{eq:generic_transference_intro}
\text{{(\textbf{\thmref{thm:generic_transference}}:)}}
&&
\sigma_{K}(\L) \,  \rho_{K'}(\L^*)   \ \   \leq  \ \   1 &&
\end{flalign}
for any suitable sets $K, K' \subset \R^n$ such that $2\nu_f(K) + \nu_{\widehat{f}}(K') < 1$ for some Fourier transform pair of functions $f,\widehat{f}$ satisfying certain analytic conditions.
In particular, taking $f$ to be the Gaussian and $K = K'$ to be a Euclidean ball of a certain radius immediately recovers Banaszczyk's Euclidean transference bound:
\begin{equation}
\label{eq:ell2_transference_intro}
	\big(\min_{\lambda\,\in\,\L_{\neq 0}} \|\l\|_2\big) \big( \max_{v\,\in\,\R^n}  \min_{\lambda\,\in\,\L^*}  \|\l-v\|_2\big) \ \  \leq \ \  \frac{n}{2\pi} + \frac{3\sqrt{n}}{\pi}
	\; .
\end{equation}
(Banaszczyk actually stated a slightly weaker result, but he noted that his proof actually yields something like~\eqref{eq:ell2_transference_intro}. See Section~\ref{sec:banaszczyk_recovered}.)

\subsection{Applications with carefully chosen test functions }

We then derive applications of~\eqref{eq:tail_bound_intro} and~\eqref{eq:generic_transference_intro} improving on previous trivial bounds, using functions $f$ whose analytic properties are tailored to the geometry at hand.

We use the function $f(x_1,\ldots, x_n) = \prod_i (1+2\cosh(2\pi x_i/\sqrt{3}))^{-1}$ to prove a transference bound in the $\ell_1$ norm,
\begin{flalign}\label{l1transfintro}
\text{(\textbf{Theorem~\ref{thm:ell1_transference}}:)} \!\!
&&  (\min_{\lambda \in \L_{\neq 0}} \! \! \|\l\|_1)  ( \max_{v \in \R^n} \min_{\lambda \in \L^*} \!  \|\l-v\|_1)
  \ \  <  \ \  c_1 n^2  (1 + 2\pi \sqrt{\smallf{3}{n}} )^2  ,
\end{flalign}
with $c_1 \approx 0.154264$. In~\cite{Banaszczykpolar}, Banaszczyk proved more general transference bounds that apply for arbitrary $\ell_p$ norms for $1 \leq p \leq \infty$, but with  unspecified constants. Previously the best known bound was the (just slightly weaker) trivial estimate with
 $c_1 = \f{1}{2\pi} + o(1) \approx 0.159155 + o(1)$, which follows immediately from~\eqref{eq:ell2_transference_intro} together with the Cauchy-Schwarz inequality $\|x\|_1 \leq \sqrt{n}  \|x\|_2$.

Finally, in Theorem~\ref{thm:handshake}, we use the functions
\begin{equation}
\label{eq:ell_p_def_intro}
f(x) \ \  = \ \  e^{-\|x\|_p^p}
\end{equation}
to prove bounds on the lattice kissing number (also known as the lattice Hadwiger number) of the $\ell_p$ balls with $0 < p \leq 2$.  Namely, we show that for such $p$
\begin{flalign}\label{lpkissing}
\text{(\textbf{Theorem~\ref{thm:handshake}}:)}
&&\#\{\lambda\,\in\,\Lambda \,:\, \|\l\|_p = \sigma_p(\L)\}
\ \  \leq \  \  O(\smallf np e^{n/p})\,, &&
\end{flalign}
where $\sigma_p(\lambda)=\min_{\lambda\in\L_{\neq 0}}\|\l\|_p$.
To the authors' knowledge, these are the best bounds presently known for $1/\log 2 < p < 2$ and for $0 < p \le 1$ --- in particular, including the  case of $p = 1$.  (See the discussion above Theorem~\ref{thm:handshake}.) Theorem~\ref{thm:handshake} actually gives a more general result:~a bound on the number of non-zero vectors whose $\ell_p$ norm is within some factor $u \geq 1$ of the minimal value.

It is a pleasure to thank our colleagues  Divesh Aggarwal,  Tamar Lichter, Assaf Naor, Chris Peikert, Oded Regev, Konrad J. Swanepoel, and Ramarathnam Venkatesan for their helpful discussions and comments. We also thank the anonymous reviewers for their very helpful comments.

\section{Poisson summation and tail bounds}\label{sec:Poissonsum}

 We begin with the following version of the Poisson summation formula:
\begin{equation}\label{PSF}
\sum_{\l\in\L}f(\smallf{\l+v}{t}) \ \ = \ \ \f{t^n}{|\L|}\sum_{\l\in\L^*}\widehat{f}(t\l)\,e(t\l\cdot v)\,, \ \ \ t\,>\,0 \ \ \text{and} \ \ v\,\in\,\R^n\,,
\end{equation}
where
$e(y) := e^{2\pi i y}$ and $\widehat{f}(x) :=\int_{\R^n}f(r)e(-r\cdot x) {\rm d}r$ is the Fourier transform of $f:\R^n\rightarrow\C$.  Here in order to justify applying this formula we assume that
\begin{equation}\label{PSFconditions}\aligned
  \text{(i)}\ &  \ \text{$f$ is continuous,} \\
  \text{(ii)}\ & \ \text{$f(x)=O((1+\|x\|_2)^{-n-\delta})$ for some $\delta>0$, and} \\
  \text{(iii)}\ & \ \text{the right-hand side of (\ref{PSF}) is absolutely convergent.}
  \endaligned
\end{equation}
See Part~\ref{item:PSF_holds} of Theorem~\ref{thm:PSF} in Appendix~\ref{sec:appendix} for a proof that these conditions are sufficient for~\eqref{PSF} to hold.

The following theorem generalizes (and slightly improves\footnote{Banaszczyk stated a slightly weaker result for the case when $v \neq 0$, but it is clear his proof gives more.})
 the main tail bound in Banaszczyk's seminal work \cite{Banaszczyk}.

\begin{thm}[Generalized tail bounds]\label{mainthm}
	Assume  that a real-valued, positive   function $f$  satisfies conditions \eqref{PSFconditions}, and its Fourier transform $\widehat{f}$   is  real-valued, nonnegative, and  monotonically non-increasing on  rays, i.e.,   $0 \leq \widehat{f}(tv)\le \widehat{f}(v)$ for all $v\in \R^n$ and $t\ge 1$.\footnote{Note that the non-negativity of $\widehat{f}$ implies   $ 0 < f \le f(0)$, and that condition (\ref{PSFconditions})(iii) is equivalent to the convergence of the right-hand side of (\ref{PSF}) at $v=0$.}  Then the following statements hold for any lattice $\L \subset \R^n$.
	\begin{enumerate}
		\item \label{item:main_one} For any $v \in \R^n$ and $t \geq 1$,
				\begin{equation}\label{lem1}
				\sum_{\l\in\L}f(\smallf{\l+v}{t})
				\ \ \le \ \  t^n\sum_{\l\in\L}f(\l)\,.
				\end{equation}
		
		\item \label{item:main_two}
		For any subset $K \subset \R^n$ and any $v \in \R^n$,
		\begin{equation}\label{ineq2b}
		\sum_{\srel{\l\in\L}{\l + v \notin K}} f(\l+v) \ \ \leq \ \ \nu_f(K)  \sum_{\l\in\L}f(\l)\,,
		\end{equation}
		where
		\begin{equation}\label{maximalf}
		\nu_f(K) \ \ := \ \ \inf_{0 \,< \,u\, \le \, 1}\, \sup_{x \, \notin \, K} \f{f(x)}{u^n f(ux)}
\,,
		\end{equation}
 provided the right-hand side is finite.	
		\item\label{item:main_three}
		If no non-zero lattice vectors lie in $K \subset \R^n$, then
		\begin{equation}
		\label{eq:part3}
		\sum_{\l\in\L^*}\widehat{f}(\l+v) \ \ \ge \ \ (1-2\nu_f(K))   \sum_{\l \in \L^*} \widehat{f}(\l)\,,
		\end{equation}
		provided that the left-hand side is convergent and $\nu_f(K)<\infty$.
		
	\end{enumerate}
\end{thm}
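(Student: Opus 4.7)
The plan is to derive all three parts from the Poisson summation formula~(\ref{PSF}) combined with the assumed non-negativity and ray-monotonicity of $\widehat{f}$. For Part~\ref{item:main_one}, I apply (\ref{PSF}) directly to obtain
\begin{equation*}
\sum_{\l \in \L} f\big(\tfrac{\l+v}{t}\big) \ = \ \frac{t^n}{|\L|} \sum_{\mu \in \L^*} \widehat{f}(t\mu) \, e(\mu \cdot v)\,.
\end{equation*}
The left-hand side is a sum of non-negative reals, hence real and bounded by the modulus of the right-hand side. Using $|e(\mu \cdot v)| = 1$ together with $0 \leq \widehat{f}(t\mu) \leq \widehat{f}(\mu)$ (which holds since $t \geq 1$), the triangle inequality gives
\begin{equation*}
\sum_{\l \in \L} f\big(\tfrac{\l+v}{t}\big) \ \leq \ \frac{t^n}{|\L|} \sum_{\mu \in \L^*} \widehat{f}(\mu) \ = \ t^n \sum_{\l \in \L} f(\l)\,,
\end{equation*}
where the last equality is (\ref{PSF}) specialized to $t=1$ and $v=0$.

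For Part~\ref{item:main_two}, I reduce to Part~\ref{item:main_one} by a rescaling argument. Given $\e > 0$, pick $u \in (0,1]$ (which exists since $\nu_f(K) < \infty$) with $\sup_{x \notin K} \frac{f(x)}{u^n f(ux)} \leq \nu_f(K) + \e$. Then for every $\l \in \L$ with $\l + v \notin K$ we have $f(\l+v) \leq (\nu_f(K) + \e)\, u^n f(u(\l+v))$. Summing, enlarging the range of summation to all of $\L$ (valid because $f \geq 0$), and applying Part~\ref{item:main_one} with $t = 1/u \geq 1$ yields
\begin{equation*}
\sum_{\srel{\l \in \L}{\l + v \notin K}} f(\l + v) \ \leq \ (\nu_f(K) + \e)\, u^n \sum_{\l \in \L} f(u(\l + v)) \ \leq \ (\nu_f(K) + \e) \sum_{\l \in \L} f(\l)\,.
\end{equation*}
Letting $\e \to 0$ proves~(\ref{ineq2b}).

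For Part~\ref{item:main_three}, the plan is to apply Poisson summation (\ref{PSF}) with the roles of $\L$ and $\L^*$ interchanged and with test function $\widehat{f}$ in place of $f$. Using $(\L^*)^* = \L$, $|\L|\cdot|\L^*| = 1$, $\widehat{\widehat{f}}(x) = f(-x)$, and re-indexing $\l \mapsto -\l$, this gives
\begin{equation*}
\sum_{\l^* \in \L^*} \widehat{f}(\l^* + v) \ = \ |\L| \sum_{\l \in \L} f(\l) \cos(2\pi \l \cdot v)\,,
\end{equation*}
where passing to real parts is legitimate because $\widehat{f}$ is real-valued. Writing $\cos(2\pi \l \cdot v) = 1 - (1 - \cos(2\pi \l \cdot v))$ and splitting by whether $\l \in K$, the key observation is that the hypothesis $\L \cap K \subseteq \{0\}$ kills the ``inside $K$'' contribution: either that sum is empty, or its only term is $\l = 0$, where $1 - \cos(0) = 0$. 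The ``outside $K$'' contribution is bounded by $0 \leq 1 - \cos \leq 2$ followed by Part~\ref{item:main_two} at $v = 0$:
\begin{equation*}
\sum_{\l \in \L \setminus K} f(\l)\,(1 - \cos(2\pi \l \cdot v)) \ \leq \ 2 \sum_{\l \in \L \setminus K} f(\l) \ \leq \ 2\nu_f(K) \sum_{\l \in \L} f(\l)\,.
\end{equation*}
Assembling these estimates and using (\ref{PSF}) once more at $v=0$ to identify $|\L| \sum_\l f(\l)$ with $\sum_{\l^*} \widehat{f}(\l^*)$ completes the proof. The main conceptual point is the decomposition $\cos = 1 - (1 - \cos)$ in Part~\ref{item:main_three}: it is tailored precisely so that the $K$-hypothesis annihilates the bad term while the tail bound of Part~\ref{item:main_two} controls the rest. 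Everything else is bookkeeping with Poisson summation and the triangle inequality.
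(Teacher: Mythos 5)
Your Parts~\ref{item:main_one} and~\ref{item:main_two} are correct and essentially identical to the paper's argument (Poisson summation plus ray-monotonicity for Part~\ref{item:main_one}; rescaling by $u$ and Part~\ref{item:main_one} with $t=1/u$ for Part~\ref{item:main_two}, with your explicit $\e$ handling being a harmless variant of the paper's infimum manipulation). In Part~\ref{item:main_three}, your algebraic skeleton --- writing $\cos = 1-(1-\cos)$, killing the inside-$K$ contribution via the hypothesis $\L\cap K\subseteq\{0\}$, and controlling the outside-$K$ part by Part~\ref{item:main_two} at $v=0$ --- is exactly the paper's computation. The gap is in how you justify the identity $\sum_{\l\in\L^*}\widehat{f}(\l+v) = |\L|\sum_{\l\in\L}f(\l)\,e(-\l\cdot v)$.

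You obtain it by applying \eqref{PSF} with the roles of $\L$ and $\L^*$ interchanged, using $\widehat{f}$ as the test function. But the theorem only assumes that $f$ satisfies \eqref{PSFconditions}; nothing guarantees that $\widehat{f}$ satisfies the decay condition (\ref{PSFconditions})(ii), and in the paper's own later applications it does not: for the supergaussian products of Section~\ref{sec:ell_p_kissing} one has $\widehat{f}(x_1,0,\ldots,0)\asymp |x_1|^{-1-p}$, which is not $O((1+\|x\|_2)^{-n-\delta})$ once $n>1+p$. (Relaxing exactly this decay requirement on $\widehat f$ is the point of Theorem~\ref{thm:PSF} in the appendix, which still demands the bound from the function being summed over the lattice.) Your step also silently uses Fourier inversion $\widehat{\widehat{f}}(x)=f(-x)$, which needs $\widehat{f}\in L^1$ --- deducible here from $\widehat f\ge 0$ and the continuity of $f$, but it is an additional unverified hypothesis in your route. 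The paper avoids both issues with a small but essential trick: apply \eqref{PSF} at $t=1$, $v=0$, over the lattice $\L$, to the modulated function $g(x)=f(x)\,e(-x\cdot w)$, whose Fourier transform is $\widehat{f}(x+w)$. Since $|g|=f$, the function $g$ inherits the continuity and decay required by \eqref{PSFconditions}(i)--(ii), and condition (iii) for $g$ is precisely the assumed convergence of the left-hand side of \eqref{eq:part3} (which is absolute convergence, as $\widehat{f}\ge 0$). This yields exactly the identity you want, no inversion needed, after which your estimate goes through verbatim.
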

\begin{proof}
	Part~\ref{item:main_one} follows immediately from the Poisson summation formula \eqref{PSF}, the assumptions, and a second application of \eqref{PSF}:
	\begin{equation}\label{ineq1}
	\sum_{\l\in\L}f(\smallf{\l+v}{t})
	\ \ \le \ \ \f{t^n}{|\L|}\sum_{\l\in\L^*}\widehat{f}(t\l)  \ \ \le \ \
	\f{t^n}{|\L|}\sum_{\l\in\L^*}\widehat{f}(\l) \ \ = \ \ t^n\sum_{\l\in\L}f(\l)\,,
	\end{equation}
	for any $v\in \R^n$ and $t\ge 1$.
	
	For Part~\ref{item:main_two}, we have for $0<u\le 1$ that
		\begin{equation}
		\label{eq:ineq2a}
\aligned
		\sum_{\l \in \L} f(u (\l + v)) \ \
		&\geq \sum_{\srel{\l \in \L}{\l + v \notin K}} f(u(\l+v))  \\
		&\geq  \ \ \inf_{x \notin K} \frac{f(ux)}{f(x)}  \!\!  \sum_{\srel{\l \in \L}{\l + v \notin K}} f(\l+v)
		\, .
\endaligned
		\end{equation}

	At the same time, we have $\sum_{\l \in \L} f(u (\l + v))  \leq u^{-n} \sum_{\l \in \L} f(\l)$  by Part~\ref{item:main_one}, from which~\eqref{ineq2b} is immediate.
	
	Finally, for Part~\ref{item:main_three}  consider the Poisson summation formula~\eqref{PSF} applied in the case $t=1$ and $v=0$ to the function $f(x)e(-x\cdot w)$ instead of $f(x)$, where $w$ is an arbitrary vector in $\R^n$.  The Fourier transform of this function is $\widehat{f}(x+w)$.  The assumption that the left-hand side of \eqref{eq:part3} converges thus shows that conditions~\eqref{PSFconditions} hold for $f(x)e(-x\cdot w)$, and hence
	\begin{align}
	\sum_{\l\in\L^*}\widehat{f}(\l+w) \ \ &= \ \ |\L|\,\sum_{\l\in\L}f(\l)\,e(-\l\cdot w)
	\nonumber \\
	&\ge \ \ |\L|\,f(0) \ - \ |\L|\sum_{\srel{\l\in\L}{\l \notin K}}f(\l)
	\nonumber \\
	&= \ \ |\L|\, \sum_{\l \in \L} f(\l) \ - \ 2 \,|\L| \sum_{\srel{\l\in\L}{\l \notin K}}f(\l)
	\nonumber \\
	&\geq  \ \ (1-2\nu_f(K))   |\L| \sum_{\l \in \L} f(\l)
	\nonumber \\
	&= \ \  (1-2\nu_f(K))  \sum_{\l \in \L^*} \widehat{f}(\l)
	\; ,
	\end{align}
	as claimed.
\end{proof}

Many functions $f$ of interest (and all of the functions that we consider in the sequel) satisfy an additional concavity property:
\begin{equation}\label{ratiomonot}
		\frac{f(u x)}{f(x)} \ \ \geq\ \  \frac{f(ut x)}{f(t x)}
		\end{equation}
for any $x \in \R^n$ and $u,t \in (0,1]$.
When this is the case and $K$ is sufficiently nice, the supremum in the definition of $\nu_f(K)$ can be replaced by a maximum over the boundary of $K$.
If the function $f$ also factors through the norm function~\eqref{normK} (like the Gaussian factors through the $\ell_2$ norm) then $\nu_f(K)$ takes a particularly nice form, as the following corollary shows. 

\begin{cor}
	\label{cor:main_theorem_norms}
Let  $K\subset \R^n$ be a compact set  whose interior contains the origin and which is starlike with respect to the origin.
Let $g : \R_{\geq 0} \to \R_{> 0}$ be an injective function for which the composition $f(x) = g(\|x\|_K)$ satisfies~\eqref{ratiomonot} and the requirements of Theorem~\ref{mainthm} (i.e., \eqref{PSFconditions} and the  monotonically non-increasing on rays condition).
Then for any $r > 0$,
\begin{equation}
	\label{eq:param_for_norms}
	\nu_f(rK) \ \  \le \  \ \mu_g(r)
	\; ,
\end{equation}
	where
	\begin{equation}\label{maximalf_norms}
	\mu_g(r) \ \ := \ \ \frac{g(r)}{\sup_{0 < u \le 1} u^{n}  g(ur)}
	\,.
	\end{equation}
	In particular, for any lattice $\L \subset \R^n$,
	 \begin{equation}\label{ineq_for_norms}
	 \sum_{\srel{\l\in\L}{\|\lambda + v\|_K \, \geq \, r}} f(\l+v) \ \ \le \ \ \mu_g(r) \, \sum_{\l\in\L}f(\l)
	 \, .
	 \end{equation}
\end{cor}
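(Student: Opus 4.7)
The plan is to reduce the multi-dimensional inf-sup defining $\nu_f(rK)$ to a one-dimensional problem using the radial nature of $f$, then use the concavity condition~\eqref{ratiomonot} to show that the supremum is achieved on the boundary $\|x\|_K = r$.

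First, I would establish the positive homogeneity $\|ux\|_K = u\|x\|_K$ for $u \ge 0$. Since $K$ is compact and starlike with respect to the origin, $rK \subseteq sK$ whenever $0 \le r \le s$ (any point $ry$ with $y \in K$ can be rewritten as $s \cdot \frac{r}{s} y$ with $\frac{r}{s} y \in K$), so the minimum in \eqref{normK} is attained and $x \notin rK$ iff $\|x\|_K > r$. Homogeneity then follows by applying the definition to $x$ and to $u^{-1}(ux)$. Consequently, writing $s = \|x\|_K$ and using $f = g \circ \|\cdot\|_K$,
\[
\sup_{x \notin rK} \frac{f(x)}{u^n f(ux)} \ \ = \ \ \sup_{s > r} \frac{g(s)}{u^n g(us)}
\]
for each $0 < u \le 1$.

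Next, I would use~\eqref{ratiomonot} to show that, for fixed $u \in (0,1]$, the function $s \mapsto g(s)/g(us)$ is non-increasing on $(0,\infty)$. Indeed, given $0 < s' \le s$, write $s' = ts$ with $t \in (0,1]$ and choose any $x$ with $\|x\|_K = s$; then~\eqref{ratiomonot} applied to this $x$ reads $g(us)/g(s) \ge g(us')/g(s')$, which is exactly the claim after inversion. Since $f$ is continuous and $\|\cdot\|_K$ is continuous (as $K$ is compact with the origin in its interior), $g(s) = f(se)$ for any fixed $e$ with $\|e\|_K = 1$ is continuous in $s$. Thus $\sup_{s > r} g(s)/g(us) = g(r)/g(ur)$ by the monotonicity and continuity, and taking the infimum over $u$ gives
\[
\nu_f(rK) \ \ \le \ \ \inf_{0 < u \le 1} \frac{g(r)}{u^n g(ur)} \ \ = \ \ \frac{g(r)}{\sup_{0 < u \le 1} u^n g(ur)} \ \ = \ \ \mu_g(r),
\]
which is~\eqref{eq:param_for_norms}.

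Finally, the inequality~\eqref{ineq_for_norms} is obtained by applying Part~\ref{item:main_two} of \thmref{mainthm} to the set $rK$, which yields the bound with $\mu_g(r)$ on the right. A mild subtlety is that \thmref{mainthm} controls the sum over $\lambda+v \notin rK$, i.e.\ over $\|\lambda+v\|_K > r$, while~\eqref{ineq_for_norms} uses $\ge r$; this is handled by applying the result to $(r-\e)K$ and letting $\e \to 0^+$, using that $\mu_g(r-\e) \to \mu_g(r)$ by continuity of $g$ together with dominated convergence / monotone convergence on the (absolutely convergent) tail sum. The main potential obstacle is confirming this passage to the limit together with verifying that $s\mapsto g(s)/g(us)$ is genuinely non-increasing from~\eqref{ratiomonot}; once those are checked, the remainder of the argument is bookkeeping.
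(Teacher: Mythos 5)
Your proof is correct and follows essentially the same route as the paper: use the homogeneity of $\|\cdot\|_K$ (via starlikeness and compactness) to reduce $\nu_f(rK)$ to a one-dimensional supremum over $s>r$, apply \eqref{ratiomonot} to bound that supremum by the boundary value $g(r)/(u^n g(ur))$, and then invoke Part~\ref{item:main_two} of Theorem~\ref{mainthm}. Your $\e$-limiting step to pass from the strict inequality $\|\lambda+v\|_K>r$ to $\|\lambda+v\|_K\ge r$ is valid but not needed: since Part~\ref{item:main_two} applies to arbitrary subsets, one may apply it directly to $\{x:\|x\|_K<r\}$, whose complement is $\{\|x\|_K\ge r\}$, and the same bound $g(s)/g(us)\le g(r)/g(ur)$ holds for all $s\ge r$.
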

\begin{proof}
	Since $g$ is injective, we have that $f(y) = g(s)$ if and only if $s=\|y\|_K$.
Thus for any fixed $u>0$ the value of $f(ux)=g(\|ux\|_K)=g(u\|x\|_K)$ depends only on $\|x\|_K$.
This implies that
	\[
	\nu_f(rK) \ \ = \ \ \inf_{0 \,< \,u\, \le \, 1}\, \sup_{x \notin rK}   \f{f(x)}{u^n f(ux)}  \ \ = \ \  \inf_{0 \,< \,u\, \le \, 1}\, \sup_{s > r}   \f{g(s)}{u^n g(us)}
	\,,	\]
where in the last equality we have used the fact $K$ is starlike.
	Finally, by~\eqref{ratiomonot}, we see that for any $s > r$, $g(s)/g(us) \leq g(r)/g(ur)$, so that $\nu_f(rK) \leq \mu_g(r)$.
	The result then follows immediately from Part~\ref{item:main_two} of Theorem~\ref{mainthm}.
	\end{proof}

From Theorem~\ref{mainthm}, we derive the following general transference bound.  Recall the definition of $\nu_f(\cdot)$ from \eqref{eq:nu_def_intro} and the definitions of $\sigma_K(\cdot)$ and $\rho_K(\cdot)$ from \eqref{eq:sigma_K_def}-\eqref{eq:rho_K_def}.

\begin{thm}[Generalized transference bound]
	\label{thm:generic_transference}
	Assume that $f, \widehat{f} > 0$ each satisfy all conditions of Theorem~\ref{mainthm} (i.e., \eqref{PSFconditions} and the  monotonically non-increasing on rays condition). Suppose that $K, K' \subset \R^n$ are compact sets with the origin in their interiors and which are starlike with respect to the origin such that
	\begin{equation}
	\label{eq:chi_bound}
	2\nu_f(K) + \nu_{\widehat{f}}(K') \ \ < \ \ 1
	\, .
	\end{equation}
	Then
	for any lattice $\L \subset \R^n$
	\begin{equation}
	\label{eq:generic_transference}
	\sigma_{K}(\L) \, \rho_{K'}(\L^*) \ \ \leq  \ \ 1
	\,.
	\end{equation}
\end{thm}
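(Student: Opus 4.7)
The plan is to argue by contradiction along Banaszczyk's transference template: assume $\sigma_K(\L)\rho_{K'}(\L^*) > 1$ and after rescaling derive incompatible upper and lower bounds on the same translated dual sum $\sum_{\l \in \L_1^*} \widehat{f}(\l + v)$. Setting $s := \sigma_K(\L)$ and $\L_1 := \L/s$ (so that $\L_1^* = s\L^*$), the hypothesis becomes $\sigma_K(\L_1) = 1$ together with $\rho_{K'}(\L_1^*) = s\rho_{K'}(\L^*) > 1$. The lower bound will come from Part~\ref{item:main_three} of \thmref{mainthm} applied to $\L_1$, and the upper bound from Part~\ref{item:main_two} applied to $\widehat{f}$ and the lattice $\L_1^*$.

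For the lower bound I would apply Part~\ref{item:main_three} to $\L_1$ with the open interior $K^\circ := \{x : \|x\|_K < 1\}$ in place of $K$. Since $\sigma_K(\L_1) = 1$, no nonzero vector of $\L_1$ lies in $K^\circ$, so the hypothesis of Part~\ref{item:main_three} holds. A short continuity argument (the gauge $\|\cdot\|_K$ and $f$ are continuous with $f > 0$) shows that the sup defining $\nu_f$ over $\{\|x\|_K > 1\}$ coincides with the sup over its closure $\{\|x\|_K \geq 1\}$, yielding $\nu_f(K^\circ) = \nu_f(K)$. Part~\ref{item:main_three} then gives
\[
\sum_{\l \in \L_1^*} \widehat{f}(\l + v) \ \geq \ (1 - 2\nu_f(K)) \sum_{\l \in \L_1^*} \widehat{f}(\l) \qquad \text{for every } v \in \R^n.
\]

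For the upper bound I would use $\rho_{K'}(\L_1^*) > 1$ to pick $v \in \R^n$ with $\|\l + v\|_{K'} > 1$, hence $\l + v \notin K'$, for every $\l \in \L_1^*$. Applying Part~\ref{item:main_two} to $\widehat{f}$, the lattice $\L_1^*$, and the set $K'$ at this $v$ then yields
\[
\sum_{\l \in \L_1^*} \widehat{f}(\l + v) \ = \sum_{\srel{\l \in \L_1^*}{\l + v \notin K'}} \widehat{f}(\l + v) \ \leq \ \nu_{\widehat{f}}(K') \sum_{\l \in \L_1^*} \widehat{f}(\l).
\]
Combining this with the lower bound and dividing through by the strictly positive sum $\sum_{\l \in \L_1^*} \widehat{f}(\l)$ (using $\widehat{f} > 0$) forces $1 - 2\nu_f(K) \leq \nu_{\widehat{f}}(K')$, contradicting~\eqref{eq:chi_bound}.

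The one technical nuisance I anticipate is the boundary issue in the first step: $\sigma_K(\L_1) = 1$ does not preclude nonzero lattice vectors on $\partial K$, so Part~\ref{item:main_three} cannot be invoked with $K$ itself but only with $K^\circ$, and the equality $\nu_f(K^\circ) = \nu_f(K)$ must be justified by continuity. Everything else reduces to the scaling bookkeeping and matching up the two sums.
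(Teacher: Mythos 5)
Your proposal is correct, and its skeleton is exactly the paper's (and Banaszczyk's): normalize so that $\sigma_K(\L)=1$, get a lower bound on the shifted dual sum from Part~\ref{item:main_three} of Theorem~\ref{mainthm} and an upper bound from Part~\ref{item:main_two} applied to $\widehat{f}$, and play them off against each other using $2\nu_f(K)+\nu_{\widehat f}(K')<1$. The one place you genuinely diverge is the boundary issue you flag at the end: since the shortest vector of the normalized lattice lies on $\partial K$, you replace $K$ by the open set $K^\circ=\{\|x\|_K<1\}$ and argue $\nu_f(K^\circ)=\nu_f(K)$, then run the argument by contradiction at a worst-case $v$. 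The paper instead dilates the lattice: $s\L$ has no non-zero vector in $K$ for every $s>1$, Parts~\ref{item:main_three} and~\ref{item:main_two} applied to $s\L$ and $s^{-1}\L^*$ give $\rho_{K'}(\L^*)\le s$ for all $s>1$, and letting $s\to 1^+$ yields the claim with no discussion of $\nu_f$ on open versus closed sets. Your route works, but one caveat on how you justify it: for a merely starlike (non-convex) compact $K$ the gauge $\|\cdot\|_K$ need not be continuous, so "the closure of $\{\|x\|_K>1\}$ is $\{\|x\|_K\ge 1\}$" is not literally true in general; what you actually need, and what does hold, is the inclusion $\{\|x\|_K\ge 1\}\subseteq\overline{\{\|x\|_K>1\}}$, which follows from homogeneity alone (a point with $\|x\|_K=1$ is the limit of $tx$ with $\|tx\|_K=t>1$), and then continuity and positivity of $f$ give $\sup_{\|x\|_K\ge 1}f(x)/(u^nf(ux))\le\sup_{\|x\|_K>1}f(x)/(u^nf(ux))$ for each $u$, hence $\nu_f(K^\circ)\le\nu_f(K)$, which is the only direction you use. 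With that small repair to the justification, your proof is complete; the paper's dilation trick simply buys you the same conclusion without having to compare $\nu_f$ on $K$ and $K^\circ$ at all.
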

\begin{proof}
	It follows from definitions~\eqref{eq:sigma_K_def} and~\eqref{eq:rho_K_def} that the left-hand side of \eqref{eq:generic_transference} is unchanged if $\Lambda$ is replaced by a scaling $t\L$.
	We thus assume, as we may by rescaling,  that $\sigma_{K}(\L) = 1$.  By definition, $s \Lambda$ then has no non-zero vectors in $K$ for any $s>1$.  Part~\ref{item:main_three} of Theorem~\ref{mainthm}, when applied to the lattice $s\Lambda$ (which has dual lattice $(s\Lambda)^*=s\i \Lambda^*$), then shows that
	\begin{equation}
	\sum_{\l\in\L^*}\widehat{f}(s\i(\l+v)) \ \ \ge \ \ (1-2 \nu_f(K) )  \sum_{\l \in \L^*} \widehat{f}(s\i\l)
	\; ,
	\end{equation}
	for any $v \in \R^n$ and any $s > 1$.
	
	By Part~\ref{item:main_two} applied to $\widehat{f}$, $s\i\Lambda^*$, and $K'$,
	\begin{equation}
	\sum_{\stackrel{\l\in\L^*}{s\i(\l + v) \,\notin\, K'}} \widehat{f}(s\i(\l+v)) \ \ \leq \ \ \nu_{\widehat f}(K')  \sum_{\l\in\L^*}\widehat{f}(s\i\l)
	\, .
	\end{equation}
	Since $\nu_{\widehat f}(K') <1-2\nu_f(K)$,
	we have
	\begin{equation}
	\sum_{\stackrel{\l\in\L^*}{s\i(\l + v) \, \notin \, K'}} \widehat{f}(s\i(\l+v))   \ \ < \ \  \sum_{\l\in\L^*}\widehat{f}(s\i(\l+v))
	\end{equation}
	for all $v \in \R^n$.  Hence for any $v\in \R^n$ there must exist some $\l \in \L^*$ such that $\l + v \in sK'$;  that is,
	$\rho_{K'}(\L^*) \leq s$. Since this holds for all $s > 1$, we deduce that $\rho_{K'}(\L^*) \leq 1$, as needed.
\end{proof}

\section{Applications of Theorems~\ref{mainthm} and~\ref{thm:generic_transference}}\label{sec:Applications}

In this section we consider various admissible pairs of functions.  We begin first with some facts about the Fourier transform in  $n=1$ dimension:
\begin{itemize}
  \item if $f(x)=e^{-\pi x^2}$, then $\widehat{f}(x)=f(x)$;
  \item if $f(x)= \sech(\pi x)$, then $\widehat{f}(x) = f(x)$;
  \item if $f(x) = (1+2\cosh(2\pi x/\sqrt{3}))^{-1}$, then $\widehat{f}(x) =  f(x)$;
   \item   if $f(x)=e^{-|x|}$, then $\widehat{f}(x)=\frac{2  }{1+4 \pi ^2 x^2}$; and
  \item if $f(x)=e^{-|x|^p}$ with $0<p\le 2$, then $\widehat{f}\ge 0$ (see \cite[Lemma 5]{EOR91}).
\end{itemize}
In the rest of this section we more generally study functions of the form
\begin{equation}\label{theform}
 f(x_1,\ldots,x_n) \ \ = \ \ \prod_{j=1}^n f(x_j)\,, \ \  \widehat{f}(x_1,\ldots,x_n) \ \ = \ \ \prod_{j=1}^n  \widehat{f}(x_j)\,,
\end{equation}
where each $f$ is one of these examples (one could further consider functions of the form $\prod_{j=1}^n f_j(x_j)$, though we shall not do so here).

\subsection{Recovering Banaszczyk's  bounds \texorpdfstring{\cite{Banaszczyk}}{}}
\label{sec:banaszczyk_recovered}

As our first example, we take $f(x) = \widehat{f}(x) = e^{-\pi \|x\|_2^2}$ to be a Gaussian, as in Banaszczyk's original application. From this, we immediately derive what is essentially Banaszczyk's original transference theorem for the Euclidean norm~\cite[Theorem 2.2]{Banaszczyk}.\footnote{Though Banaszczyk's theorem states that $\sigma_2(\L)\rho_2(\L^*) \leq n/2$, he remarks towards the end of his paper that a more careful analysis yields a bound like~\eqref{eq:banaszczyk_transference_recovered}.  He also proves that there exist   lattices  $\Lambda$ in arbitrarily large dimensions with $\sigma_2(\L)\rho_2(\L^*)\gg n$.  In fact, his $n/2$ bound has the optimal constant $C$ among bounds of the form $Cn$,
 since $\sigma_2(\Z) \rho_2(\Z) = 1/2$.  He also proved additional transference bounds relating successive minima, a topic which we have chosen to omit for the sake of brevity.}

\begin{thm}[$\ell_2$ transference bound]
	\label{thm:banaszczyk_recovered}
	For any $\L \subset \R^n$, let  $\sigma_2(\L) := \min_{\l \in \L_{\neq 0}} \|\l \|_2$ denote the length of the its shortest non-zero vector in the Euclidean norm, and let $\rho_2(\L^*) := \max_{v \in \R^n} \min_{\l \in \L^*} \|\l - v\|_2$ denote the covering radius of its dual lattice in the Euclidean norm. Then
	\begin{equation}
	\label{eq:banaszczyk_transference_recovered}
	\sigma_2(\L)\,\rho_2(\L^*)  \ \ \le \ \ \frac{n}{2\pi} + \frac{3\sqrt{n}}{\pi}
	\, .
	\end{equation}
\end{thm}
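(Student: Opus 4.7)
The plan is to apply \thmref{thm:generic_transference} to the self-dual Gaussian $f(x) = \widehat{f}(x) = e^{-\pi \|x\|_2^2}$ with the symmetric choice $K = K' = r B$, where $B := \{x \in \R^n : \|x\|_2 \le 1\}$ is the closed Euclidean unit ball and $r$ is chosen so that $r^2 = n/(2\pi) + 3\sqrt n/\pi$. The conclusion \eqref{eq:generic_transference} then reads $\sigma_{rB}(\L)\,\rho_{rB}(\L^*) \le 1$, and since $\|x\|_{rB} = \|x\|_2/r$ this rescales to the desired bound $\sigma_2(\L)\rho_2(\L^*) \le r^2$.

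To verify the hypothesis \eqref{eq:chi_bound}, I would invoke \corref{cor:main_theorem_norms} with $g(s) = e^{-\pi s^2}$, which gives $\nu_f(rB) \le \mu_g(r)$. The concavity condition \eqref{ratiomonot} holds for the Gaussian because the ratio of its two sides equals $\exp(\pi(1-u^2)(1-t^2)\|x\|_2^2) \ge 1$. Since $r \ge \sqrt{n/(2\pi)}$, a one-variable optimization in $u$ locates the maximizer at $u = \sqrt{n/(2\pi)}/r \in (0,1]$ and produces the classical closed form
\[
\mu_g(r)\ =\ (2\pi e r^2/n)^{n/2}\, e^{-\pi r^2}\,,
\]
the same quantity appearing in Banaszczyk's original tail bound. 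Substituting the prescribed $r^2$ simplifies this to $\mu_g(r) = (1+6/\sqrt n)^{n/2} e^{-3\sqrt n}$.

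What remains---and what I expect to be the only real technical obstacle---is the verification of $3\mu_g(r) < 1$ uniformly in $n \ge 1$. The usual cubic Taylor estimate $\log(1+y) \le y - y^2/2 + y^3/3$ gives only $\log \mu_g(r) \le -9 + 36/\sqrt n$, which suffices for $n$ comfortably large but not down to $n = 1$. I would instead use the sharper inequality $\log(1+y) < y - y^2/[2(1+y)]$ for $y > 0$ (the difference has derivative $-y^2/[2(1+y)^2]$ and vanishes at the origin). Applied with $y = 6/\sqrt n$, this yields $\log \mu_g(r) < -9/(1+6/\sqrt n) \le -9/7$ for all $n \ge 1$, and since $e^{9/7} > 3$ we conclude $\mu_g(r) < 1/3$. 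Hypothesis \eqref{eq:chi_bound} is thereby satisfied, and \thmref{thm:generic_transference} delivers the bound. The considerable slack here---the asymptotic value of $\mu_g(r)$ is $e^{-9}$---suggests that sharpening the constant $3/\pi$ is entirely plausible, either via an asymmetric choice of radii $K = rB$, $K' = r'B$ or by shrinking $r$ slightly.
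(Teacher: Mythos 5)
Your proposal is correct and follows essentially the same route as the paper: the self-dual Gaussian with $K=K'=rB$, $r^2=\frac{n}{2\pi}+\frac{3\sqrt n}{\pi}$, Corollary~\ref{cor:main_theorem_norms} giving $\nu_f(rB)\le(1+6/\sqrt n)^{n/2}e^{-3\sqrt n}$, and Theorem~\ref{thm:generic_transference} plus rescaling. The only difference is that you spell out the ``straightforward computation'' that $3\nu_f(rB)<1$ via the inequality $\log(1+y)<y-\frac{y^2}{2(1+y)}$, which is a valid and welcome elaboration of a step the paper leaves to the reader.
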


\begin{proof}%
		Let $f(x) = \widehat{f}(x) := e^{-\pi \|x\|_2^2}$,
		$\tau := \f 12 + \f {3}{\sqrt{n}}$, $r := \sqrt{\tau n/\pi}$, and $K := \{ x \in \R^n  : \|x\|_2 \leq 1\}$.
		By Corollary~\ref{cor:main_theorem_norms},
		\[
		\nu_{f}(rK) \ \  \leq  \ \ \frac{e^{-\pi r^2}}{\sup_{0 < u \leq 1} u^n e^{-\pi u^2 r^2}} \  \ = \ \  (2e^{1-2\tau} \tau)^{n/2} \ \  = \ \  (1+6/\sqrt{n})^{n/2} e^{-3 \sqrt{n}}
		\; ,
		\]
using the fact that the supremum in the denominator occurs at $u=\frac{\sqrt{n}}{\sqrt{2 \pi } r}$.
		A straightforward computation then shows that $3\nu_f(rK) < 1$.
		Applying Theorem~\ref{thm:generic_transference}, we see that
		$
		\sigma_{rK}(\L) \rho_{rK}(\L^*) \leq 1
		$.
		The result then follows by the scaling formulas $\sigma_2(\L) = r\sigma_{rK}(\L)$ and $\rho_2(\L^*) = r\rho_{rK}(\L^*)$, so that
		$
			\sigma_2(\L) \rho_2(\L^*)  \leq r^2
		$,
		as was to be shown.
\end{proof}

It is interesting to speculate whether or not (\ref{eq:banaszczyk_transference_recovered}) can be improved by using carefully optimized test functions.  Banaszczyk's choice of the Gaussian appears to be particularly natural among functions of the form $f(x)=g(\|x\|_2)$, with $g$ fixed and the dimension $n$ varying. This is because such $f$ which are bounded, continuous, and integrable on $\R^n$, and which furthermore have non-negative Fourier transform $\widehat{f}$, can be expressed using  Schoenberg's theorem  as
\begin{equation}\label{posdef}
  f(x) \ \ = \ \ \int_0^\infty e^{-\pi t^2\|x\|_2^2}\,d\alpha(t)
\end{equation}
for some nonnegative Borel measure $\alpha$ on $(0,\infty)$~\cite{Schoenberg}.  By the Fubini theorem, functions of the form (\ref{posdef}) are integrable on $\R^n$ if and only if $\int_0^\infty t^{-n}d\alpha(t)<\infty$, in which case the Fourier transform
\begin{equation}\label{posdefft}
  \widehat{f}(r) \ \ = \ \ \int_0^\infty e^{-\pi\|r\|_2^2/t^2}\,t^{-n}\,d\alpha(t)
\end{equation}
has a similar form.  Gaussians correspond to when the measure $\alpha$ is concentrated at a single point.  When the measure has larger support, a heuristic argument replacing these integrals by finite sums of Gaussians shows that  the best-possible constants in (\ref{eq:banaszczyk_transference_recovered}) are achieved for a single Gaussian.  This suggests that improving  (\ref{eq:banaszczyk_transference_recovered}) would require functions beyond simply those of the form $f(x)=g(\|x\|_2)$, where $g$ is independent of $n$.

\subsection{A transference bound in the \texorpdfstring{$\ell_1$}{ell\_1} norm}
\label{sec:ell_1_transference}

In this subsection, we take
\begin{equation}\label{gbetadef}
  f(x) = f(x_1,\ldots,x_n)  \ \ := \ \  \prod_{i\,=\,1}^n \frac{1}{1+2 \cosh(2\pi x_i/\sqrt{3})} \; .
\end{equation}
As   noted above, this function possesses  the Fourier duality  $\widehat{f}(x) =f(x)$ in analogy to Gaussians.  However, its asymptotics
 $\log(f(x)) \approx -2\pi \|x\|_1/\sqrt{3}$  are related to   the $\ell_1$ norm (as opposed to the $\ell_2$ norm  for Gaussians).

\begin{lem}
	\label{lem:cosh_thing}
Let \[
	C^*  \ \  :=  \ \   \max_{z \geq 0} \Big( z -  \frac{z \tanh(z)}{1+\sech(z)/2} \Big) \ \  \approx \ \  0.42479\,,
	\]
and let $$K_\alpha \ \  := \ \  \{ x \in \R^n  :  \|x\|_1 \leq (1 + C^* )\alpha n\}$$ be the $\ell_1$ ball of radius $(1 + C^* )\alpha n$.  Then
for any $\alpha > \f{\sqrt{3}}{2\pi }$,
	\begin{equation}
	\label{eq:sech_chi}
	\nu_{f}(K_\alpha) \ \ \leq \ \  \Big( \frac{2\pi \alpha}{\sqrt{3}}\Big)^n   e^{-(\f{2\pi \alpha}{\sqrt{3}} -1 )n}
	\, .
	\end{equation}
\end{lem}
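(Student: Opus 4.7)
Set $u := \sqrt{3}/(2\pi\alpha)$, which lies in $(0,1)$ by the hypothesis $\alpha > \sqrt{3}/(2\pi)$. The target bound then reads $\nu_f(K_\alpha) \leq u^{-n} e^{-(1/u - 1)n}$, and my strategy is to plug this specific $u$ into the infimum in \eqref{maximalf} and show that
\[
\frac{f(x)}{u^n f(ux)} \ \ \leq \ \ u^{-n} e^{-(1/u-1)n} \qquad \text{for all } x \notin K_\alpha.
\]
Writing $\beta := 2\pi/\sqrt{3}$, $y_i := \beta x_i$, and $h(t) := \log(1 + 2\cosh t)$, the evenness of $\cosh$ gives
\[
\frac{f(x)}{u^n f(ux)} \ \ = \ \ u^{-n}\exp\Big(-\sum_{i=1}^n \bigl[h(|y_i|) - h(u|y_i|)\bigr]\Big),
\]
so the task reduces to proving $\sum_i [h(|y_i|) - h(u|y_i|)] \geq (1/u - 1)n$ whenever $\|y\|_1 > (1+C^*)n/u$ (the rescaled form of the constraint $x \notin K_\alpha$).

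\textbf{Key per-coordinate estimate.} A direct computation yields $h'(z) = \tanh(z)/(1 + \sech(z)/2)$, so the defining identity
\[
C^* \ \ = \ \ \max_{z \geq 0}\ z\bigl(1 - h'(z)\bigr)
\]
is exactly equivalent to the inequality $h'(t) \geq 1 - C^*/t$ for every $t > 0$. Integrating this from $uy$ to $y$ for any $y > 0$ produces the linear-in-$y$ lower bound
\[
h(y) - h(uy) \ \ \geq \ \ (1 - u)y \ - \ C^*\log(1/u),
\]
which then extends to all $y \in \R$ by evenness of $h$ (and holds trivially at $y = 0$ since $\log(1/u) \geq 0$).

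\textbf{Closing.} Summing the per-coordinate bound and applying the hypothesis $\|y\|_1 > (1+C^*)n/u$ gives
\[
\sum_{i=1}^n \bigl[h(|y_i|) - h(u|y_i|)\bigr] \ \ > \ \ \frac{(1-u)(1+C^*)n}{u} \ - \ nC^*\log(1/u).
\]
It then suffices to verify this exceeds $(1-u)n/u = (1/u-1)n$, which after one line of algebra collapses to $(1-u)/u \geq \log(1/u)$, i.e.\ $v - 1 \geq \log v$ at $v = 1/u \geq 1$---the classical inequality.

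The only real obstacle is conceptual: one must recognize that the constant $C^*$ has been engineered precisely so that (i) the pointwise inequality $h'(t) \geq 1 - C^*/t$ integrates to a usable linear-in-$|y|$ lower bound on $h(y) - h(uy)$, and (ii) the logarithmic deficit $C^*\log(1/u)$ that falls out of the integration is matched exactly by the slack $C^*n/u$ in the constraint $\|y\|_1 > (1+C^*)n/u$, so that the final inequality reduces to $\log v \leq v - 1$. Once one sees this design principle, the rest is mechanical bookkeeping.
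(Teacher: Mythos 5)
Your proof is correct and follows essentially the same route as the paper: the same choice $u=\sqrt{3}/(2\pi\alpha)$, the same key derivative identity $\frac{d}{dt}\log(1+2\cosh t)=\tanh(t)/(1+\sech(t)/2)$, and the same use of $C^*$ to get a per-coordinate linear-in-$|x_i|$ bound that is then summed against the constraint $\|x\|_1>(1+C^*)\alpha n$. The only (harmless) difference is bookkeeping: you integrate the pointwise inequality $h'(t)\ge 1-C^*/t$ exactly, picking up a $C^*\log(1/u)$ deficit closed by $\log v\le v-1$, whereas the paper bounds the scaling integral by its endpoint value and applies the $C^*$ estimate at the scaled argument, obtaining the slightly weaker deficit $(1-u)C^*/u$ directly.
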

\begin{proof}
	Let $x = (x_1,\ldots, x_n) \in \R^n$.
	By differentiating $\log f(ux)$ with respect to $u$, we see that
	\begin{align*}
	\log(f(u x)) - \log(f(x))
	& \ \ = \ \  \frac{2\pi}{\sqrt{3}}\sum_i  \int_u^1 \frac{x_i \tanh(2\pi v x_i/\sqrt{3})}{1+\sech(2\pi v x_i/\sqrt{3})/2}\, {\rm d} v \\
	& \ \ \geq \ \  \frac{(1-u)2 \pi}{\sqrt{3}} \cdot \sum_i  \frac{x_i \tanh(2\pi u x_i/\sqrt{3})}{1+\sech(2\pi u x/\sqrt{3})/2}
	\, ,
	\end{align*}
	where the inequality follows from the fact that the integrand is monotonically non-decreasing in $v$.
	
	Next, we note that
	\begin{align*}
	|x_i| -   \frac{ x_i\tanh(2\pi u x_i/\sqrt{3})}{1+\sech(2\pi u x_i/\sqrt{3})/2}
	& \ \ \leq  \ \ \frac{\sqrt{3}}{2\pi u} \cdot \max_{z \geq 0} \Big( z -  \frac{z\tanh(z)}{1+\sech(z)/2} \Big)\\
	& \ \  = \ \  \frac{\sqrt{3}}{2\pi u} \cdot C^* \; .
	\end{align*}
	Therefore,
	\[
	\log(f(u x)) - \log(f(x))   \ \ > \ \ \frac{(1-u)2 \pi n}{\sqrt{3}}   \(\frac{\|x\|_1}{n} - \frac{\sqrt{3} C^*}{2\pi u}\)
	 .
	\]
	Taking  $u =\f{\sqrt{3}}{2\pi \alpha} < 1$, it follows that
	\[
	u^n \frac{f(ux)}{f(x)} \ \ > \ \ \Big( \frac{2\pi \alpha}{\sqrt{3}}\Big)^{-n} e^{(\f{2\pi \alpha}{\sqrt{3}} -1) (\f{\|x\|_1}{\alpha n} - C^*)}
	\; ,
	\]
	so that in particular
	\[
		u^n \frac{f(ux)}{f(x)} \ \ > \ \ \Big( \frac{2\pi \alpha}{\sqrt{3}}\Big)^{-n}  e^{(\f{2\pi \alpha}{\sqrt{3}} -1 )n}
	\]
	for $x \notin K_\alpha$ (i.e., $\|x\|_1 > (1 + C^* )\alpha n$). The result now follows after recalling the definition of $\nu_f(\cdot)$ in   (\ref{eq:nu_def_intro}).
\end{proof}

\begin{thm}[$\ell_1$ transference bound]
	\label{thm:ell1_transference}
		For any lattice $\L \subset \R^n$, let  $\sigma_1(\L) := \min_{\l \in \L_{\neq 0}} \|\l \|_1$ denote the length of  its shortest non-zero vector in the $\ell_1$ norm, and let $\rho_1(\L^*) := \max_{v \in \R^n} \min_{\l \in \L^*} \|\l - v\|_1$ denote the covering radius of its dual lattice in the $\ell_1$ norm. Then
		\begin{equation}
		\label{eq:ell1_transference}
		\sigma_1(\L)\,\rho_1(\L^*)  \ \ < \ \ 0.154264 n^2 \cdot \(1 + 2 \pi\sqrt{\smallf 3n} \)^2
		\, .
		\end{equation}
\end{thm}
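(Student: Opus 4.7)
The plan is to apply Theorem~\ref{thm:generic_transference} with the self-dual function $f$ of~\eqref{gbetadef} (so $\widehat f = f$) and with $K = K' = K_\alpha$ for a parameter $\alpha > \sqrt{3}/(2\pi)$ chosen below. Because $\widehat f = f$, the hypothesis~\eqref{eq:chi_bound} collapses to the single inequality $3\,\nu_f(K_\alpha) < 1$, and Lemma~\ref{lem:cosh_thing} reduces this further to
\[
3\,\bigl(\tfrac{2\pi\alpha}{\sqrt 3}\bigr)^n \exp\bigl(-(\tfrac{2\pi\alpha}{\sqrt 3}-1)n\bigr) \ \ < \ \ 1.
\]
Once this holds, Theorem~\ref{thm:generic_transference} yields $\sigma_{K_\alpha}(\L)\,\rho_{K_\alpha}(\L^*) \le 1$. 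Since $K_\alpha$ is the $\ell_1$-ball of radius $R := (1 + C^*)\alpha n$, we have $\|\cdot\|_{K_\alpha} = \|\cdot\|_1/R$, so the scaling relations $\sigma_1(\L) = R\,\sigma_{K_\alpha}(\L)$ and $\rho_1(\L^*) = R\,\rho_{K_\alpha}(\L^*)$ give $\sigma_1(\L)\,\rho_1(\L^*) \le R^2 = (1+C^*)^2 \alpha^2 n^2$.

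I then take $\alpha := \tfrac{\sqrt 3}{2\pi}\bigl(1 + 2\pi\sqrt{3/n}\bigr)$, which obviously satisfies $\alpha > \sqrt 3/(2\pi)$. For this $\alpha$, the preceding bound becomes
\[
(1+C^*)^2 \cdot \tfrac{3}{4\pi^2} \cdot n^2 \cdot \bigl(1 + 2\pi\sqrt{3/n}\bigr)^2,
\]
and the numerical identity $(1+C^*)^2 \cdot 3/(4\pi^2) \approx 0.154264$ (using $C^* \approx 0.42479$) reproduces the constant in~\eqref{eq:ell1_transference} exactly. It therefore remains only to verify the displayed inequality for this particular $\alpha$.

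Writing $\epsilon := 2\pi\sqrt{3/n}$ so that $2\pi\alpha/\sqrt 3 = 1 + \epsilon$, the inequality is equivalent to $n\bigl(\epsilon - \log(1+\epsilon)\bigr) > \log 3$. Using the elementary bound $\epsilon - \log(1+\epsilon) = \int_0^\epsilon t/(1+t)\,dt \ge \epsilon^2/(2(1+\epsilon))$, the left-hand side is at least $6\pi^2/(1 + 2\pi\sqrt{3/n})$, which exceeds $\log 3$ for every $n \ge 1$ by a one-line numeric check (even at $n=1$ the ratio is $\approx 4.98 > 1.099$). The main point requiring care is the balance in choosing $\alpha$: it must be large enough for Lemma~\ref{lem:cosh_thing} to push $\nu_f(K_\alpha)$ below $1/3$ uniformly in $n$, yet small enough to leave the clean constant in~\eqref{eq:ell1_transference}. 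The choice $\epsilon = 2\pi\sqrt{3/n}$ is far from optimal asymptotically (any $\epsilon \gg 1/\sqrt n$ would work), but it is tuned precisely to give the advertised closed form.
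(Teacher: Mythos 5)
Your proposal is correct and follows essentially the same route as the paper: your choice $\alpha = \tfrac{\sqrt 3}{2\pi}\bigl(1+2\pi\sqrt{3/n}\bigr)$ is identical to the paper's $\alpha = \tfrac{\sqrt 3}{2\pi} + \tfrac{3}{\sqrt n}$, and the argument (Lemma~\ref{lem:cosh_thing} to get $3\nu_f(K_\alpha)<1$, then Theorem~\ref{thm:generic_transference} plus the scaling by $(1+C^*)\alpha n$) is the same. Your explicit verification that $n\bigl(\epsilon-\log(1+\epsilon)\bigr)>\log 3$ just spells out what the paper calls a ``straightforward computation.''
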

\begin{proof}
	Let $\alpha :=  \f{\sqrt{3}}{2\pi } + \f{3}{\sqrt{n}}$, and set $K_\alpha := \{ x \in \R^n  :  \|x\|_1 \leq (1 + C^*) \alpha n\}$  and $C^* = 0.42479\cdots$ as in the statement of Lemma~\ref{lem:cosh_thing}. Applying the lemma, we have
	\[
	\nu_{f}(K_\alpha)  \ \  \le  \ \   \Big( \frac{2\pi \alpha}{\sqrt{3}}\Big)^n \cdot e^{-(\f{2\pi \alpha}{\sqrt{3}} -1 )n}  \ \ < \ \   \frac{1}{3}
	\; ,
	\]
	 where the second inequality follows by a straightforward computation.
	Therefore, $3\nu_{f}(K_\alpha) < 1$.  It is straightforward to verify that $f=\widehat{f}$ obeys the assumptions of Theorem~\ref{thm:generic_transference}, and hence
	\[
	\sigma_{K_\alpha}(\L) \rho_{K_\alpha}(L)  \ \ \leq \ \  1
	\; .
	\]
	We then obtain the result by simply noting that
	$
	\sigma_1(\L) =  (1 + C^*) \alpha n \cdot \sigma_{K_\alpha}(\L)
	\; ,
	$
	and similarly $\rho_1(\L^*) =  (1 + C^*) \alpha n \cdot \sigma_{K_\alpha}(\L) $, so that their product is at most
	\[
	(1 + C^*)^2 \alpha^2 n^2 \ \  < \ \  0.154264 n^2 \cdot (1 + 2 \pi\sqrt{3/n} )^2
	\; ,
	\]
	as needed.
\end{proof}

\subsection{Supergaussians, \texorpdfstring{$\ell_p$}{ell\_p} norms for \texorpdfstring{$0 < p\le 2$}{0<p<=2}, and the kissing number}
\label{sec:ell_p_kissing}

Here, we consider the following specialization of Theorem~\ref{mainthm} to functions of the form $f(x) := \exp(-\|x\|_p^p)= e^{-(|x_1|^p + \cdots + |x_n|^p)}$, which are sometimes referred to as ``supergaussians.''
\begin{lem}
	\label{lem:ell_p_gaussians}
	Let $0 < p \leq 2$ and $f_p(x) := \exp(-\|x\|_p^p)$. Then
	\[
	\sum_{\srel{\l\,\in\,\L}{\|\lambda + v\|_p \, \geq\, t(n/p)^{1/p}}} f_p(\l+v) \ \ \le \ \ \big( e t^p  e^{-t^p} \big)^{n/p} \, \sum_{\l\,\in\,\L}f_p(\l)
	\]
for any $t \geq 1$.
\end{lem}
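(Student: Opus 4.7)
The plan is to apply Corollary~\ref{cor:main_theorem_norms} with $K = \{x \in \R^n : \|x\|_p \le 1\}$ (the $\ell_p$ unit ball, which is compact, contains the origin in its interior, and is star-like with respect to the origin for every $p>0$), with $g(s) = e^{-s^p}$ so that $f_p(x) = g(\|x\|_p)$, and with radius $r = t(n/p)^{1/p}$. This reduces the lemma to a one-variable optimization of
\[
\mu_g(r) \ \ = \ \ \frac{g(r)}{\sup_{0 < u \le 1} u^n g(ur)} \ \ = \ \ \frac{e^{-r^p}}{\sup_{0 < u \le 1} u^n e^{-u^p r^p}}\, .
\]

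Before invoking the corollary I would verify its hypotheses for $f_p$.  Since $f_p(x) = \prod_j e^{-|x_j|^p}$, the Fourier transform tensorizes as $\widehat{f_p}(x) = \prod_j \widehat{g}(x_j)$, each factor being non-negative by \cite[Lemma 5]{EOR91}; continuity and super-polynomial decay of $f_p$ are immediate, and $g$ is manifestly injective on $[0,\infty)$.  The ratio-monotonicity condition~\eqref{ratiomonot} follows from $f_p(ux)/f_p(x) = \exp(\|x\|_p^p(1-u^p))$ being non-decreasing in $\|x\|_p$ whenever $u \in (0,1]$.  The remaining hypothesis—that $\widehat{f_p}$ is non-increasing along rays from the origin—reduces, via the tensor product structure and non-negativity of the factors, to showing that each 1D $\widehat{g}$ is even and non-increasing on $[0,\infty)$, i.e.\ unimodal; this is the classical unimodality theorem for symmetric $p$-stable densities, which covers the full range $0 < p \le 2$.

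With the corollary applicable, the remainder is a direct calculus computation.  Differentiating $n \log u - u^p r^p$ yields a unique critical point $u_\star = (n/(pr^p))^{1/p}$; substituting $r^p = t^p n/p$ gives $u_\star = 1/t$, which lies in $(0,1]$ precisely because $t \ge 1$.  Since $u^n e^{-u^p r^p}$ vanishes at the endpoints of $(0,\infty)$, this critical point is the maximum, and $\sup_{0<u\le 1} u^n e^{-u^p r^p} = t^{-n} e^{-n/p}$.  Therefore
\[
\mu_g(r) \ \ = \ \ \frac{e^{-t^p n/p}}{t^{-n} e^{-n/p}} \ \ = \ \ t^n e^{(n/p)(1-t^p)} \ \ = \ \ \big( e\, t^p\, e^{-t^p}\big)^{n/p},
\]
which is exactly the claimed factor.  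The main obstacle in this proof is really the verification of the monotonicity-on-rays condition for $\widehat{f_p}$ when $p<2$: everything else is bookkeeping, but this step relies on the nontrivial (if classical) unimodality theorem for symmetric $p$-stable densities.  (For $p=2$ one has $\widehat{f_2}$ itself Gaussian and the condition is trivial.)
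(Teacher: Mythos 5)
Your overall route is the same as the paper's: apply Corollary~\ref{cor:main_theorem_norms} with $g(s)=e^{-s^p}$ and the $\ell_p$ ball, then do the one-variable optimization, and your computation of $\mu_g(r)=(et^pe^{-t^p})^{n/p}$ at $r=t(n/p)^{1/p}$ (critical point $u_\star=1/t\in(0,1]$) is correct, as is your verification of~\eqref{ratiomonot}. Where you differ is in justifying that $\widehat{f_p}$ is non-negative and non-increasing on rays: you cite \cite[Lemma 5]{EOR91} only for positivity and then invoke unimodality of symmetric $p$-stable densities for the monotonicity of each one-dimensional factor. That is a legitimate alternative, but note that the paper gets both properties at once from the same reference: $e^{-|x|^p}$ has the Schoenberg form~\eqref{posdef} (a mixture of Gaussians), so its Fourier transform has the form~\eqref{posdefft} and is manifestly non-negative and decreasing in $|r|$ --- no appeal to the stable-density unimodality theorem is needed.

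There is, however, a genuine omission: you never verify condition~\eqref{PSFconditions}(iii), the absolute convergence of the dual-lattice sum $\sum_{\l\in\L^*}\widehat{f_p}(t\l)$, which is a hypothesis of Theorem~\ref{mainthm} and hence of Corollary~\ref{cor:main_theorem_norms}. Your remark that ``continuity and super-polynomial decay of $f_p$ are immediate'' only covers conditions (i) and (ii). For $0<p<2$ condition (iii) is not automatic, because $\widehat{f_p}$ decays only polynomially: each one-dimensional factor is asymptotic to a constant times $|r|^{-1-p}$. This is exactly the delicate point here --- it is why the paper proves the relaxed Poisson summation formula of Theorem~\ref{thm:PSF} (which assumes decay of $f$ but only convergence, not decay, of the dual sum), and the paper explicitly flags (iii) as ``the only remaining condition,'' disposing of it via the asymptotics of $\int_\R e^{-|x|^p}e^{-2\pi irx}\,{\rm d}r$ (see \cite{Sidi}). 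The gap is repairable: from those asymptotics (or from the tail behavior of the stable density you already invoke) one gets $\widehat{g}(r)=O(\min(1,|r|^{-1-p}))$, hence $\widehat{f_p}(x)\ll\prod_j(1+|x_j|)^{-1-p}$, and this is summable over any lattice $\L^*$; but some such argument must be supplied for the invocation of the corollary to be complete.
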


\begin{proof}
	We apply Corollary~\ref{cor:main_theorem_norms} to $f=f_p$.
It is well-known (see, for example, \cite[Lemma 5]{EOR91}) that
the single-variable function
  $x\mapsto e^{-|x|^p}$ has the form (\ref{posdef}).  Since it is integrable, its Fourier transform has the form (\ref{posdefft}) with $n=1$, and   is in particular non-negative and non-increasing on rays.  Furthermore, a straightforward computation shows that $f_p$ satisfies \eqref{ratiomonot}.
  The only remaining condition to show is (\ref{PSFconditions})(iii), which is the absolute convergence of the right-hand side of the Poisson summation formula.  This follows from the fact that the Fourier transform $\int_\R e^{-|x|^p}e^{-2\pi ir x}{\rm d}x $ of  $e^{-|x|^p}$  is asymptotic to
  $-\frac{\pi ^{-p-\frac{1}{2}} |r|^{-p-1} \Gamma \left(\frac{p+1}{2}\right)}{\Gamma \left(-\frac{p}{2}\right)}$ for $0<p<2$ (see, for example, \cite{Sidi} for a
  recent treatment of the asymptotics of Fourier integrals with singularities).
It follows that for   $r := t(n/p)^{1/p}$,
		\[
		\sum_{\srel{\l\in\L}{\|\lambda + v\|_p \geq r}} f_p(\l+v) \ \ \le \ \ \mu_p(r) \sum_{\l\in\L}f_p(\l)
		\]
		with
		\[
		\mu_p(r) \ \ := \ \ \frac{e^{-r^p}}{\sup_{0 < u \le 1} u^{n}  e^{-(u r)^p}}
		\,.
		\]
		A simple computation shows that $\mu_p(r) = (epr^p/n)^{n/p}e^{-r^p}$.
\end{proof}

From this, we derive an upper bound of $e^{n/p + o(n/p)}$ on the lattice kissing number or lattice Hadwiger number --- the number of non-zero lattice points with minimal length --- in $\ell_p$ norms for $0 < p \leq 2$.
To the authors' knowledge, the only previously known bounds on these quantities for $p \neq 2$ were the trivial bounds $2(2^n - 1)$ for $1 < p < 2$ and $3^n - 1$ for $p = 1$. (Much better bounds are known for $p = 2$ using sophisticated techniques~\cite{KL78}, and as far as we know nothing was known for $p < 1$.) Talata also provided evidence for a conjectured upper bound of $1.5^{n + o(n)}$ for the $ p = 1$ case. See~\cite{Swan17} for a recent survey of such results.
We actually prove a slightly more general bound of $e^{u^p n/p + o(u^p n/p)}$ on the ``$u$-handshake number'' number, which is the number of non-zero lattice points whose length is within a factor $u \geq 1$ of the minimal length.\footnote{We note that this quantity must be unbounded as $p \to 0$, as even in $n=2$ dimensions there exist lattices with infinitely many non-zero lattice points $\lambda = (\lambda_1,\ldots, \lambda_n)$ such that $\prod_i |\lambda_i|$ is minimal. (For example, take the canonical embedding of the ring the integers of a number field having infinitely many units.) Since $\|\l\|_p^p \sim n+ p \sum_i\log |\lambda_i|$ as $p \to 0$, this implies that the $u$-handshake number for such lattices and $u > 1$ is unbounded as $p \to 0$.}  Thus the kissing number is simply the $1$-handshake number.

\begin{thm}[$\ell_p$ handshake number bound]
	\label{thm:handshake}
	For any $0 < p \leq 2$ and lattice $\L \subset \R^n$, let
	$\sigma_p(\L) := \min_{\l \in \L_{\neq 0}} \|\l\|_p$. Then
	\begin{equation}
	\label{eq:handshake}
	\#\{\l \in \L_{\neq 0} \ : \ \|\l\|_p \ \leq \  u\, \sigma_p(\L) \}   \ \  \leq  \ \ 10\,\smallf{e^{u^p} n}{p}  \, e^{u^p n/p}
	\;
	\end{equation}
for any $u \ge 1$.
	In   particular, when $u=1$  this shows  that  the lattice kissing number in the $\ell_p$ norm is   $O(\f np e^{n/p})$ for all $0 < p \leq 2$.
\end{thm}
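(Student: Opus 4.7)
The plan is to deduce this bound from Lemma~\ref{lem:ell_p_gaussians} by a rescaling together with a simple counting argument using the test function $f_p(x)=e^{-\|x\|_p^p}$. The key observation is that if we scale $\L$ so that $\sigma_p(\L') = t(n/p)^{1/p}$ for a carefully chosen $t>1$, then every non-zero lattice vector of $\L'$ automatically lies in the tail region of Lemma~\ref{lem:ell_p_gaussians} (applied at $v=0$), so the lemma controls the \emph{entire} non-zero sum of $f_p$ over $\L'$.

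Concretely, I would replace $\L$ by $\L' := (t(n/p)^{1/p}/\sigma_p(\L))\,\L$ and take $t>1$ defined by $t^p := 1 + p/n$ (which guarantees $t\ge 1$ as required by the lemma, and is the crucial choice). Applying Lemma~\ref{lem:ell_p_gaussians} with $v=0$ then gives
$$\sum_{\l'\in\L'\setminus\{0\}} f_p(\l') \ \le \ \tau\sum_{\l'\in\L'} f_p(\l') \ = \ \tau\bigl(1 + \sum_{\l'\in\L'\setminus\{0\}} f_p(\l')\bigr),$$
with $\tau := (et^pe^{-t^p})^{n/p}=\bigl((1+p/n)e^{-p/n}\bigr)^{n/p}<1$, so rearranging yields $\sum_{\l'\neq 0}f_p(\l')\le \tau/(1-\tau)$. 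Meanwhile, each nonzero $\l' \in \L'$ with $\|\l'\|_p \le ut(n/p)^{1/p}$ contributes at least $e^{-(ut)^p n/p}$ to this sum, and such vectors are in bijection with $\{\l \in \L\setminus\{0\} : \|\l\|_p \le u\,\sigma_p(\L)\}$. The choice of $t$ delivers the clean identity $(ut)^p n/p = u^p(1+p/n)n/p = u^p n/p + u^p$, so the handshake count $N$ satisfies
$$N \ \le \ e^{(ut)^p n/p}\cdot \frac{\tau}{1-\tau} \ = \ e^{u^p}\, e^{u^p n/p}\cdot \frac{\tau}{1-\tau}.$$

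It remains to show the uniform scalar estimate $\tau/(1-\tau)\le 10\,n/p$, i.e., $\bigl((1+x)e^{-x}\bigr)^{1/x} \le 10/(10+x)$ on $x := p/n \in (0,2]$. Taking logarithms and clearing the factor $x$ reduces this to showing $G(x):=\ln(1+x)+x\ln(1+x/10)-x\le 0$ on $(0,2]$. One checks $G(0)=G'(0)=0$ and $G''(0)=-4/5<0$, so $G(x) = -2x^2/5 + O(x^3)$ near $0$; a direct evaluation of $G'$ verifies that it remains negative throughout $(0,2]$, which together with $G(2)=\ln 3+2\ln(6/5)-2<0$ gives the desired non-positivity. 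This one-variable estimate is the only real bookkeeping issue in the proof, and is the main (though elementary) obstacle; the conceptual content is the scaling choice $t^p = 1 + p/n$, which is essentially the smallest value for which $\tau/(1-\tau)$ remains polynomially bounded in $n/p$. Combining the steps yields $N\le 10(n/p)e^{u^p}e^{u^pn/p}$, as required, and specializing to $u=1$ gives the stated kissing-number bound.
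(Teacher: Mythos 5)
Your proposal is correct and follows essentially the same route as the paper's proof: rescale so that $\sigma_p(\L)=t(n/p)^{1/p}$ with $t^p=1+p/n$, apply Lemma~\ref{lem:ell_p_gaussians} at $v=0$ to bound $\sum_{\l\neq 0}f_p(\l)$ after rearranging the resulting inequality, and lower-bound the contribution of the handshake vectors by $e^{-u^pt^pn/p}$. The only (immaterial) difference is the scalar bookkeeping: you verify $\tau/(1-\tau)\le 10n/p$ directly via the function $G$, whereas the paper invokes the equivalent elementary fact $1-\frac{(1+x^{-1})^x}{e}\ge\frac{1}{10x}$ for $x=n/p\ge\frac12$.
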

\begin{proof}
	Let $f_p(x) := \exp(-\|x\|_p^p)$. By scaling the lattice appropriately, we may assume that $\sigma_p(\L) = t (\frac np)^{1/p}$ for $t := (1+\frac pn)^{1/p}$. The Theorem shows
		\[
		\sum_{\l \in \L_{\neq 0}} f_p(\l)  \ \ =  \ \ \sum_{\srel{\l \in \L}{\|\l\|_p \,\geq\, \sigma_p(\L)}} f_p(\l) \ \  \leq  \ \ \big( e t^p  e^{-t^p} \big)^{n/p} \, \sum_{\l\in\L}f_p(\l)
		\, .
		\]
		Noting that $\sum_{\l\in\L}f_p(\l) = 1 + \sum_{\l \in \L_{\neq 0}} f_p(\l)$ and rearranging, we see that
		\[
		\sum_{\l \in \L_{\neq 0}} f_p(\l) \ \  \leq \ \  \frac{\big( e t^p  e^{-t^p} \big)^{n/p}}{1-\big( e t^p  e^{-t^p} \big)^{n/p}} \ \   \leq \ \   \smallf{10n}{p}\,\big( e t^p  e^{-t^p} \big)^{n/p}
			\, ,
		\]
		where in the last inequality we have used the  fact $1-\f{(1+x\i)^x}{e} \geq \f{1}{10x}$ for $x=\f np \geq \f 12$. 
		Let $S$ denote the set of $\l \in \L_{\neq 0}$ with $\|\l\|_p \leq u \sigma_p(\L) = ut (\f np)^{1/p}$. Then
		\[
		\sum_{\l \, \in\, \L_{\neq 0}} f_p(\l)  \ \ \geq \ \  \sum_{\l \, \in\, S} f_p(\l) \ \  \geq \ \   e^{-u^p t^p n/p}\,|S|
		\; .
		\]
		Combining the two inequalities, rearranging, and then using the fact that $(1+\f pn)^{n/p}\le e$, we obtain
		\[
		|S|  \ \ \leq  \ \ \smallf{10n}{p}\,\big( e t^p  e^{(u^p-1)t^p} \big)^{n/p} \ \ \leq  \ \  \smallf{10n}{p}\, e^{(1+n/p)u^p}
		\; ,
		\]
		as was to be shown.
\end{proof}

\appendix

\section{The Poisson Summation Formula}\label{sec:appendix}

Here, we state and prove a version of the Poisson summation formula flexible enough for our applications.  The notation $\|x\|=\|x\|_2$ refers to the $\ell_2$ norm.
\begin{thm}\label{thm:PSF}
Let $f(x)$ denote   a continuous, complex-valued function on $\R^n$ which is $O((1+\|x\|)^{-n-\delta})$ for some $\delta>0$.

\begin{enumerate}
	\item \label{item:fourier_inv} The Fourier inversion formula
\begin{equation}\label{appfourinv_thm}
  f(x) \ \ = \ \ \int_{\R^n}\widehat{f}(r)\,e(r\cdot x)\,dr
\end{equation}
holds provided $f$'s Fourier transform $\widehat{f}(x)=\int_{\R^n}f(r)e(-r\cdot x)dr$ is integrable (i.e., $\int_{\R^n}|\widehat{f}(x)|dx <\infty$).

\item \label{item:PSF_holds} The Poisson summation formula
\begin{equation}\label{appPSF}
  \sum_{\l\in\L}f(\smallf{\l+v}{t}) \ \ = \ \ \f{t^n}{|\L|}\sum_{\l\in\L^*}\widehat{f}(t\l)\,e(t\l\cdot v)\,, \ \ \ t\,>\,0 \ \ \text{and} \ \ v\,\in\,\R^n\,,
\end{equation}
holds provided the right-hand side converges absolutely.
\end{enumerate}
\end{thm}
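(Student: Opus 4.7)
The plan is a classical two-step approach: first prove Part~\ref{item:fourier_inv} by Gaussian regularization, and then bootstrap Part~\ref{item:PSF_holds} by periodizing $f$ over $\L$ and invoking uniqueness of Fourier series on the torus $\R^n/\L$. The general $t>0$ dependence in Part~\ref{item:PSF_holds} will be handled at the end by rescaling the lattice.

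For Part~\ref{item:fourier_inv}, the decay hypothesis places $f\in L^1(\R^n)$, so $\widehat f$ is continuous and bounded, and the further hypothesis $\widehat f\in L^1$ makes the candidate inverse
\[
\tilde f(x) \ := \ \int_{\R^n}\widehat{f}(r)\,e(r\cdot x)\,dr
\]
a continuous bounded function. First I would introduce a Gaussian mollifier $g_\eps(r):=e^{-\pi\eps^2\|r\|^2}$ and consider the auxiliary integrals
\[
I_\eps(x) \ := \ \int_{\R^n}\widehat{f}(r)\,g_\eps(r)\,e(r\cdot x)\,dr\,.
\]
Dominated convergence with majorant $|\widehat f|\in L^1$ forces $I_\eps(x)\to\tilde f(x)$ as $\eps\downarrow 0$. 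On the other hand, the elementary multiplication formula $\int\widehat f\cdot h \,= \int f\cdot\widehat h$ (a Fubini computation valid whenever $f,h\in L^1$), applied with $h(r):=g_\eps(r)e(r\cdot x)$, rewrites $I_\eps(x)$ as a convolution of $f$ against a Gaussian approximate identity centered at $x$. Continuity of $f$ then yields $I_\eps(x)\to f(x)$ pointwise, and equating the two limits proves~\eqref{appfourinv_thm}.

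For Part~\ref{item:PSF_holds}, the key object is the periodization
\[
F(v) \ := \ \sum_{\l\in\L}f(v+\l)\,,
\]
which by the decay of $f$ converges absolutely and uniformly on compact sets, hence defines a continuous $\L$-periodic function on $\R^n$. Unfolding the integral over a fundamental domain for $\L$ shows that its Fourier coefficient at $\mu\in\L^*$ equals $|\L|^{-1}\widehat f(\mu)$. Under the absolute-convergence hypothesis, the series
\[
G(v) \ := \ \frac{1}{|\L|}\sum_{\mu\in\L^*}\widehat f(\mu)\,e(\mu\cdot v)
\]
(the $t=1$, unshifted form) converges uniformly to a continuous $\L$-periodic function with the same Fourier coefficients as $F$. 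Uniqueness of Fourier series for continuous functions on the torus (e.g.\ via Fej\'er's theorem) then forces $F\equiv G$. The full statement for arbitrary $t>0$ and $v\in\R^n$ then follows by applying this special case to the rescaled lattice $t^{-1}\L$, whose dual is $t\L^*$, combined with the scaling relation $\widehat{f(\cdot/t)}(\xi)=t^n\widehat f(t\xi)$ and a change of summation variable.

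The main obstacle in the plan is bookkeeping rather than substance. For Part~\ref{item:fourier_inv}, one must carefully align the two different limits — the dominated-convergence limit on the Fourier side and the mollifier limit on the spatial side — with the correct normalization of the Gaussian kernel. For Part~\ref{item:PSF_holds}, since the hypothesis is only absolute convergence of the discrete sum and \emph{not} any decay or integrability of $\widehat f$ on $\R^n$, one cannot appeal to Fourier inversion pointwise on $\R^n$; the entire comparison must be carried out on the compact torus $\R^n/\L$, where uniqueness of Fourier series for continuous functions is enough.
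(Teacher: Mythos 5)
Your proposal is correct, and for Part~\ref{item:PSF_holds} it takes a genuinely different route from the paper. The paper handles both parts with a single regularization: it convolves $f$ with a compactly supported smooth approximate identity $\phi_\eps$, checks that $f_\eps$ and $\widehat{f}_\eps=\widehat{f}\cdot\widehat{\phi}(\eps\,\cdot)$ both satisfy the two-sided decay bound $O((1+\|x\|)^{-n-\delta})$, invokes the classical inversion and Poisson summation statements for such pairs (citing \cite[Theorem~2.1]{Cohn}), and then removes the mollifier by dominated convergence, using $|\widehat{\phi}(\eps x)|\le 1$ on the Fourier side and the uniform-in-$\eps$ decay of $f_\eps$ plus pointwise convergence on the spatial side. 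Your Part~\ref{item:fourier_inv} is the dual of this maneuver (a Gaussian damping factor on the frequency side, identified via the multiplication formula with a Gaussian approximate identity acting on $f$); it is essentially the same regularize-and-pass-to-the-limit argument, with the small advantage of not needing to quote inversion for a nicer subclass. Your Part~\ref{item:PSF_holds}, by contrast, avoids mollification and any pre-established Poisson summation theorem: you periodize $f$ over $\L$, compute the Fourier coefficients by unfolding (Fubini, justified since $f\in L^1$), note that the hypothesis $\sum_{\mu\in\L^*}|\widehat f(\mu)|<\infty$ makes the dual series a uniformly convergent continuous function with the same coefficients, and conclude by uniqueness of Fourier series for continuous functions on $\R^n/\L$ (Fej\'er or Stone--Weierstrass); the reduction of general $t>0$ to $t=1$ via the rescaled lattice $t^{-1}\L$ is also fine, since the absolute-convergence hypothesis transports verbatim. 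This is more self-contained, and you correctly identify the one point that must be respected either way: the hypothesis controls $\widehat f$ only on the dual lattice, so the comparison must happen on the compact torus (or, in the paper's version, via dominated convergence of the lattice sums), never via pointwise inversion on $\R^n$. One bookkeeping remark: carrying out your rescaling carefully produces the phase $e(\l\cdot v)$ on the right-hand side of \eqref{appPSF} (for the left-hand side as written, with $f(\smallf{\l+v}{t})$), not $e(t\l\cdot v)$; the printed phase appears to be a typo in the statement (the two agree when $t=1$), and it is harmless in the paper's applications, where the exponential factors are either trivial or bounded in absolute value, but your derivation gives the correct form.
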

Both parts of the Theorem are well-known and classical if $f$ is a Schwartz function, or even if both $f(x)$ and $\widehat{f}(x)$ merely satisfy the $O((1+\|x\|)^{-n-\delta})$ bound for some $\delta>0$ (see, for example, \cite[Theorem 2.1]{Cohn}).  Thus the main point here is to relax the condition on the decay of $\widehat{f}$, which is needed in Section~\ref{sec:ell_p_kissing}.
\begin{proof}
Let $\phi\ge 0$ denote a fixed, smooth function supported in the unit ball of $\R^n$ and having total integral $ \int_{\R^n}\phi(x)dx=1$.  For any $0<\e<1$ define the rescaled function $\phi_\e(x)=\e^{-n}\phi(x/\e)$, which also has total integral 1.  We have the estimate
\begin{equation}\label{appposbound}
 |\widehat{\phi}(r)| \ \ \le \ \ \int_{\R^n}\phi(x)\,dx \ \ = \ \ 1 \ \ = \ \ \widehat{\phi}(0)
\end{equation}
 by the non-negativity of $\phi$.

The convolution
\begin{equation}\label{appconv}
  f_\e(x) \ \ := \ \  \int_{\R^n}f(y)\,\phi_\e(x-y)\,dy
\end{equation}
is smooth.  Since $\int_{\R^n}\phi(x)dx=1$,
\begin{equation}\label{apppointwiseprecursor}
 f_\e(x)-f(x) \ \  = \ \ \int_{\R^n}(f(y)-f(x))\,\phi_\e(x-y)\,dy \ \ \le \ \
 \max_{y\in B_\e(x)}|f(y)-f(x)|
\end{equation}
where $B_{\epsilon}(x)$ denotes the closed $\ell_2$ ball of radius $\epsilon$ around $x$.
Therefore
 \begin{equation}\label{apppointwise}
   \lim_{\e\rightarrow 0}\ f_\e(x) \ \ = \ \ f(x)
 \end{equation}
   by the assumed continuity of $f$.

We may bound $f_\e(x)$ using the compact support of $\phi$ as
\begin{equation}\label{appdecayf}
  |f_\e(x)| \  \ll  \ \e^{-n}\int_{\R^n}(1+\|y\|)^{-n-\delta}\,\phi\(\frac{x-y}\e\)dy
  \  \ll \ \e^{-n}\int_{B_{\epsilon}(x)}(1+\|y\|)^{-n-\delta}\,dy\,.
\end{equation}
  The boundedness of the integrand shows that this is $O(1)$.  For $\|x\| \ge 2$ and $y\in B_{\e}(x)$, we have $\|y\| \ge \|x\|-\e \ge \f{1}{2}\|x\|$, and thus the right-hand side of~\eqref{appconv} is $O(\|x\|^{-n-\delta})$.  Combining these two estimates, we see that
\begin{equation}\label{appfbound}
 f_\epsilon(x) \ \ = \ \  O((1+\|x\|)^{-n-\delta})\,,
\end{equation}
independently of $\epsilon$ ---   the same bound that we assumed $f(x)$ satisfies.

  In particular the Fourier transform of $f_\epsilon$ is well-defined, and a change of variables shows it factors as
  \begin{equation}\label{appfactorFT}
    \widehat{f}_\e(x) \ \ = \ \ \widehat{f}(x)\,\widehat{\phi}_\e(x) \ \  = \ \ \widehat{f}(x)\,\widehat{\phi}(\epsilon x)\,.
\end{equation}
The decay assumption on $f$ implies that it is integrable, so that $\widehat{f}(x)$ is bounded.
    Since $\phi$ and all its derivatives have compact support,  the Riemann-Lebesgue Lemma implies that $\widehat{\phi}(x)$ decays faster than the reciprocal of any polynomial as $\|x\|\rightarrow \infty$.  It follows that
    \begin{equation}\label{appfeftadmiss}
    \widehat{f}_\epsilon(x) \ \ = \ \  O_\e((1+\|x\|)^{-n-\delta})\,,
    \end{equation}
    where the last subscript indicates that the implied constant depends on $\epsilon$.
 The Fourier inversion formula
\begin{equation}\label{appfourinv}
  f_\e(x) \ \ = \ \ \int_{\R^n}\widehat{f}_\e(r)\,e(r\cdot x)\,dr \ \ = \ \ \int_{\R^n}\widehat{f}(r)\,\widehat{\phi}(\epsilon r)\,e(r\cdot x)\,dr
\end{equation}
is therefore valid for $f_\e(x)$.

If $\widehat{f}(r)$  is integrable, then the bound  $\widehat{\phi}(\e r)\le 1$ from~\eqref{appposbound} and dominated convergence imply that the right-hand side of~\eqref{appfourinv} converges to $\int_{\R^n}\widehat{f}(r)e(r\cdot x)dr$ in the limit as $\epsilon\rightarrow 0$.   Combined with~\eqref{apppointwise}, this proves~\eqref{appfourinv_thm} and hence Part~\ref{item:fourier_inv}.

To finish, we consider Part~\ref{item:PSF_holds}.
Both $f_\epsilon(x)$ and $\widehat{f}_\epsilon(x)$   satisfy the admissibility bound $O((1+\|x\|)^{-n-\delta})$ by~\eqref{appfbound} and~\eqref{appfeftadmiss}.  Therefore the Poisson summation formula~\eqref{appPSF} is valid with $f$ replaced by $f_\epsilon$ (\cite[Theorem~2.1]{Cohn}):
\begin{equation}\label{appPSFforfeps}
  \sum_{\l\in\L}f_\e(\smallf{\l+v}{t}) \ \ = \ \ \f{t^n}{|\L|}\sum_{\l\in\L^*}\widehat{f}(t\l)\,\widehat{\phi}(\epsilon t \lambda)\,e(t\l\cdot v)\,, \ \ \ t\,>\,0 \ \ \text{and} \ \ v\,\in\,\R^n\,,
\end{equation}
where we have used the factorization~\eqref{appfactorFT}.  We now again use~\eqref{appposbound} and dominated convergence to show that the right-hand side of~\eqref{appPSFforfeps} converges to the right-hand side of~\eqref{appPSF}) as $\epsilon\rightarrow 0$, using the assumed absolute convergence of the latter.  To conclude, we apply dominated convergence to the left-hand side (using the bound~\eqref{appfbound} and the pointwise limit~\eqref{apppointwise}) to show that the left-hand side converges $\sum_{\l\in\L}f(\frac{\l+v}{t})$, as was to be shown.
\end{proof}

\end{document}